\newtheorem{theorem}{Theorem}
\newtheorem{lemma}[theorem]{Lemma}
\newenvironment{proof}[1][Proof]{\noindent\textbf{#1.} }{\ \rule{0.5em}{0.5em}}
\begin{document}

\title{A converse comparison theorem for backward stochastic differential
equations with jumps\thanks{%
This paper is based on a homonymous paper published in \textit{Statistics
and Probability Letters} 81, 298--301, 2011, which contains an error in the
proof of the main theorem.\ It presents a similar result, but for a more
restricted class of equations, so that the error is now fixed. The author
thank Monique Jeanblanc for pointing out the error and for fruitful
discussions about how to fix it. }}
\author{Xavier De Scheemaekere\thanks{%
F.R.S.-F.N.R.S. Research Fellow; Centre Emile Bernheim, Solvay Brussels
School of Economics and Management, Universit\'{e} libre de Bruxelles\textit{%
\ }(ULB); Postal address: Av. F.D. Roosevelt, 50, CP 145/1, 1050 Brussels,
Belgium ; Tel.: +32.2.650.39.59; E-mail address: xdeschee@ulb.ac.be.}}
\date{May 2011 }
\maketitle

\begin{abstract}
This paper establishes a converse comparison theorem for real-valued
decoupled forward backward stochastic differential equations with jumps.
\end{abstract}

\textit{MSC 2010}: primary 60H10

\textit{Keywords:} Backward stochastic differential equation with jumps;
Comparison theorem; Converse comparison theorem.

\section{Introduction}

A backward stochastic differential equation (BSDE) is an equation of the type%
\begin{equation}
Y_{t}=\xi
_{T}+\dint\nolimits_{t}^{T}g(s,Y_{s},Z_{s})ds-\dint%
\nolimits_{t}^{T}Z_{s}dW_{s},  \label{classical bsde}
\end{equation}%
where $\xi $ is called the terminal condition, $T$ the terminal time, $g$
the generator, and the pair of processes $(Y,Z)$ the solution of the
equation. In 1990, Pardoux and Peng \cite{Pardoux and Peng} proved that
there exists a unique adapted and square integrable solution to BSDE (\ref%
{classical bsde}) as soon as the generator is Lipschitz with respect to $y$
and $z$ and the terminal condition is square integrable.\ Since then, BSDE
theory has become an important field of research, with applications, e.g.,
in stochastic optimal control, mathematical finance or partial differential
equations (PDEs).

An important feature of BSDE theory is the comparison theorem (Peng \cite{cc
Peng}, El Karoui et al. \cite{ccEl Karoui et al}), which plays the same role
as the maximum principle for PDEs.\ The comparison theorem allows to compare
the solutions of two real-valued BSDEs whenever we can compare the terminal
conditions and the generators. The converse, however, is not true in
general. Hence, converse comparison theorems for BSDEs have been concerned
with the following question: If one can compare the solutions of two
real-valued BSDEs with the same terminal condition, for all terminal
conditions\textit{, }can one compare the generators?

Previous works on the subject (Chen \cite{Chen}, Briand et al. \cite{Coquet
et al}, Coquet et al. \cite{Coquet Hu et al}, Jiang \cite{Jiang I},\cite{cc
Jiang}) deal with classical BSDEs (i.e., without jumps).\ This paper
provides a converse comparison theorem for class of BSDEs with jumps, namely
infinite horizon decoupled forward backward stochastic differential
equations with jumps.\ 

\section{BSDEs with jumps}

\subsection{Notation and assumptions}

Suppose that $W_{t}^{\top }=(W_{t}^{1},...,W_{t}^{d}),$ $t\geq 0,$ is a $d$%
-dimensional standard Brownian Motion and $K^{\top
}(t)=(K_{1}(t),...,K_{l}(t))$, $t\geq 0,$ is an $l$-dimensional stationary
Poisson point process taking values in a measurable space $(B,\mathbf{B})$,
where $B=%
%TCIMACRO{\U{211d} }%
%BeginExpansion
\mathbb{R}
%EndExpansion
_{0}$ is equipped with its Borel field $\mathbf{B.}$ Denote by $\mu ^{\top
}(dt,de)=(\mu _{1}(dt,de),...,\mu _{l}(dt,de))$ the Poisson random measure
induced by $K(\cdot )$ with compensator $\lambda ^{\top }(de)dt=(\lambda
_{1}(de),...,\lambda _{l}(de))dt$ such that $\left\{ \widetilde{\mu }%
_{i}([0,t],A)=(\mu _{i}-\lambda _{i})([0,t],A)\right\} _{t\geq 0}$ is a
martingale for all $A\in \mathbf{B}$ satisfying $\lambda _{i}(A)<\infty ,$ $%
i=1,...,l$. $\lambda _{i}$ is assumed to be a $\sigma $-finite measure on $%
(B,\mathbf{B})$.

Let $(\Omega ,\mathbf{\tciFourier },\left( \tciFourier _{t}\right) _{t\geq
0},P)$ be a complete and standard measurable probability space equipped with
a filtration denoted as 
\begin{equation*}
\left( \tciFourier _{t}\right) _{t\geq 0}=\left( \sigma \lbrack W_{s};s\leq
t]\vee \sigma \lbrack \mu ((0,s],A);s\leq t,A\in \mathbf{B}]\vee N\right)
_{t\geq 0},
\end{equation*}%
where $N$ is all the $P$-null sets.\ It is a complete right-continuous
filtration. Let $\tau $ be a (possibly infinite) $\left( \tciFourier
_{t}\right) _{t\geq 0}$-stopping time and\ define the following spaces:

\begin{itemize}
\item $S^{2}$ denotes the set of all $\tciFourier _{t}$-progressively
measurable RCLL processes $Y$ valued in $%
%TCIMACRO{\U{211d} }%
%BeginExpansion
\mathbb{R}
%EndExpansion
^{n}$ such that $E\left[ \underset{0\leq t\leq \tau }{\sup }\left\vert
Y_{t}\right\vert ^{2}\right] <\infty ;$

\item $L^{2}(W)$ denotes the set of all predictable\footnote{%
By predictable, we mean predictable with respect to the filtration $\left(
\tciFourier _{t}\right) _{0\leq t\leq \tau }.$} processes $Z$ valued in $%
%TCIMACRO{\U{211d} }%
%BeginExpansion
\mathbb{R}
%EndExpansion
^{n\times d}$ such that $E\left[ \tint\nolimits_{0}^{\tau }\left\vert
Z_{t}\right\vert ^{2}dt\right] <\infty ;$

\item $L^{2}(\widetilde{\mu })$ denotes the set of all $\mathbf{P}\otimes 
\mathbf{B}$-measurable processes $U_{t}(e)$ valued in $%
%TCIMACRO{\U{211d} }%
%BeginExpansion
\mathbb{R}
%EndExpansion
^{n\times l}$ such that%
\begin{equation*}
E\left[ \dint\nolimits_{0}^{\tau }\dint\nolimits_{B}\left\vert
U_{t}(e)\right\vert ^{2}\lambda (de)dt\right] :=E\left[ \dint\nolimits_{0}^{%
\tau }\left\Vert U_{t}(\cdot )\right\Vert ^{2}dt\right] <\infty ,
\end{equation*}
\end{itemize}

where \textbf{P} denotes the $\sigma $-algebra generated by all predictable
subsets;

\begin{itemize}
\item $L^{2}(B,\mathbf{B,}\lambda ;%
%TCIMACRO{\U{211d} }%
%BeginExpansion
\mathbb{R}
%EndExpansion
^{n\times l})$ denotes the set of all $\mathbf{B}$-measurable functions $%
\Phi (\cdot )$ valued in $%
%TCIMACRO{\U{211d} }%
%BeginExpansion
\mathbb{R}
%EndExpansion
^{n\times l}$ such that%
\begin{equation*}
\left\Vert \Phi \right\Vert ^{2}=\dint\nolimits_{B}\left\vert \Phi
(e)\right\vert ^{2}\lambda (de)<\infty .
\end{equation*}
\end{itemize}

The (adapted) solution of a random horizon BSDE with jumps is defined as a
triple of processes $(Y_{t},Z_{t},U_{t}),$ $t\in \lbrack 0,\tau ],$
belonging to $S^{2}\times L^{2}(W)\times L^{2}(\widetilde{\mu })$ such that 
\begin{equation}
Y_{t}=\xi _{\tau }+\dint\nolimits_{t\wedge \tau }^{\tau }f(\omega
,s,Y_{s},Z_{s},U_{s})ds-\dint\nolimits_{t\wedge \tau }^{\tau
}Z_{s}dW_{s}-\dint\nolimits_{t\wedge \tau }^{\tau }\dint\nolimits_{B}U_{s}(e)%
\widetilde{\mu }(ds,de),  \label{BSDE jumps}
\end{equation}

where the terminal condition $\xi \in L^{2}(\Omega ,\tciFourier _{\tau },P)$
and the generator $f:\Omega \times \lbrack 0,\infty )\times 
%TCIMACRO{\U{211d} }%
%BeginExpansion
\mathbb{R}
%EndExpansion
^{n}\times 
%TCIMACRO{\U{211d} }%
%BeginExpansion
\mathbb{R}
%EndExpansion
^{n\times d}\times L^{2}(B,\mathbf{B,}\lambda ;%
%TCIMACRO{\U{211d} }%
%BeginExpansion
\mathbb{R}
%EndExpansion
^{n\times l})\rightarrow 
%TCIMACRO{\U{211d} }%
%BeginExpansion
\mathbb{R}
%EndExpansion
^{n}$ is $\mathbf{P}\otimes \mathbf{B}(%
%TCIMACRO{\U{211d} }%
%BeginExpansion
\mathbb{R}
%EndExpansion
^{n})\otimes \mathbf{B}(%
%TCIMACRO{\U{211d} }%
%BeginExpansion
\mathbb{R}
%EndExpansion
^{n\times d})\otimes \mathbf{B}(L^{2}(B,\mathbf{B},\lambda ;%
%TCIMACRO{\U{211d} }%
%BeginExpansion
\mathbb{R}
%EndExpansion
^{n\times l}\mathbf{)\mathbf{)}}$-measurable.

\bigskip

We will denote by $Y_{t}(f,\xi _{\tau })$ the first component of the
solution of BSDE (\ref{BSDE jumps}). In the sequel, we consider the
following assumptions on generator $f:$

\begin{itemize}
\item[(A1)] 
\begin{equation*}
E\left[ \dint\nolimits_{0}^{\tau }\left\vert f(\omega ,t,0,0,0)\right\vert
^{2}dt\right] <\infty ;
\end{equation*}

\item[(A2)] $P-a.s$., for any $Y,Y^{\prime }\in 
%TCIMACRO{\U{211d} }%
%BeginExpansion
\mathbb{R}
%EndExpansion
^{n}$, $Z,Z^{\prime }\in 
%TCIMACRO{\U{211d} }%
%BeginExpansion
\mathbb{R}
%EndExpansion
^{n\times d}$, $U,U^{\prime }\in L^{2}(B,\mathbf{B},\lambda ;%
%TCIMACRO{\U{211d} }%
%BeginExpansion
\mathbb{R}
%EndExpansion
^{n\times l}\mathbf{)}$, $t\geq 0,$

\begin{eqnarray*}
\left\vert f(\omega ,t,Y,Z,U)-f(\omega ,t,Y^{\prime },Z^{\prime },U^{\prime
})\right\vert &\leq &u_{1}(t)\left\vert Y-Y^{\prime }\right\vert \\
&&+u_{2}(t)\left( \left\vert Z-Z^{\prime }\right\vert +\left\Vert
U-U^{\prime }\right\Vert \right) ,
\end{eqnarray*}%
where $u_{1}(t)$ and $u_{2}(t)$ are nonnegative, deterministic functions
satisfying 
\begin{equation}
\dint\nolimits_{0}^{\infty }u_{1}(t)dt+\dint\nolimits_{0}^{\infty
}u_{2}(t)^{2}dt<\infty .  \label{Lipschtiz}
\end{equation}

\item[(A3)] $P-a.s$, $\forall (Y,Z,U)\in 
%TCIMACRO{\U{211d} }%
%BeginExpansion
\mathbb{R}
%EndExpansion
^{n}\times 
%TCIMACRO{\U{211d} }%
%BeginExpansion
\mathbb{R}
%EndExpansion
^{n\times d}\times L^{2}(B,\mathbf{B,}\lambda ;%
%TCIMACRO{\U{211d} }%
%BeginExpansion
\mathbb{R}
%EndExpansion
^{n\times l}),$ $t\rightarrow f(\omega ,t,Y,Z,U)$ is continuous in $t\in
\lbrack 0,\tau ].$

\item[(A4)] $P-a.s$, $\forall Y\in 
%TCIMACRO{\U{211d} }%
%BeginExpansion
\mathbb{R}
%EndExpansion
^{n},\forall Z\in 
%TCIMACRO{\U{211d} }%
%BeginExpansion
\mathbb{R}
%EndExpansion
^{n\times d},\forall U,U^{\prime }\in L^{2}\mathbf{(}B,\mathbf{B},\lambda ;%
%TCIMACRO{\U{211d} }%
%BeginExpansion
\mathbb{R}
%EndExpansion
^{n\times l}\mathbf{)},$ we have%
\begin{equation*}
\left\vert f(\omega ,t,Y,Z,U)-f(\omega ,t,Y,Z,U^{\prime })\right\vert \leq
\left\vert \dint\nolimits_{B}\left( U(e)-U^{\prime }(e)\right) \overline{%
\gamma }_{t}(\omega ,e)^{\top }\lambda (de)\right\vert ,
\end{equation*}%
where $\overline{\gamma }:\Omega \times \lbrack 0,\infty )\times
B\rightarrow 
%TCIMACRO{\U{211d} }%
%BeginExpansion
\mathbb{R}
%EndExpansion
^{n\times l}$ is $\mathbf{P}\otimes \mathbf{B}$-measurable and satisfies
\end{itemize}

$\tint\nolimits_{B}\left\vert \overline{\gamma }_{t}(\omega ,e)\right\vert
^{2}\lambda (de)\leq u_{2}(t)^{2},$ and where $\overline{\gamma }%
_{t}^{i}(\omega ,e),$ $i=1,...,l,$ is the $i$th component of $\overline{%
\gamma }_{t}(\omega ,e)$ satisfying $\overline{\gamma }_{t}^{i}(\omega
,e)\geq -1$, $P-a.s.$

\subsection{Preliminary results}

This section presents existence, uniqueness and comparison results for
adapted solutions to BSDEs with jumps and with random terminal times.

\begin{theorem}[Yin and Situ \protect\cite{cc Yin and Sityu}]
\label{Theorem BBP} Let $\xi _{\tau }$ and $f$ be the terminal condition and
the generator of BSDE (\ref{BSDE jumps}), respectively, and let $f$ satisfy
(A1) and (A2). Then there exists a unique solution to BSDE (\ref{BSDE jumps}%
).
\end{theorem}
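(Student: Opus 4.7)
My plan is to follow the standard Picard iteration scheme for BSDEs, adapted to the jump-diffusion setting with random horizon and time-varying Lipschitz coefficients $u_1, u_2$ satisfying (\ref{Lipschtiz}).

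First, I would equip the space $S^2 \times L^2(W) \times L^2(\widetilde{\mu})$ with a weighted norm of the form
\[
\|(Y,Z,U)\|_\beta^2 := E\left[\sup_{0\le t\le \tau} e^{\beta(t)}|Y_t|^2\right] + E\left[\int_0^\tau e^{\beta(s)}\bigl(|Z_s|^2 + \|U_s\|^2\bigr)\,ds\right],
\]
where $\beta(t) = K\int_0^t u_1(s)\,ds + K\int_0^t u_2(s)^2\,ds$ for a constant $K$ to be fixed later. Assumption (\ref{Lipschtiz}) ensures $\beta$ is bounded on $[0,\infty)$, so this norm is equivalent to the unweighted one; the underlying space is therefore complete under $\|\cdot\|_\beta$.

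Next, I would define a map $\Phi$ from this Banach space into itself by freezing the input: $\Phi(y,z,u) = (Y,Z,U)$ is the solution of (\ref{BSDE jumps}) with driver $f(s,y_s,z_s,u_s)$ (independent of the unknown). Existence of this triple comes from the martingale representation theorem for the filtration generated by $W$ and $\widetilde\mu$: define the square-integrable martingale $M_t := E\bigl[\xi_\tau + \int_0^\tau f(s,y_s,z_s,u_s)\,ds \mid \mathcal{F}_t\bigr]$, whose integrability follows from (A1), (A2) and $(y,z,u)$ being in the space. Representation produces $(Z,U)$, and the $S^2$-estimate on $Y$ follows from Doob's maximal inequality.

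The crux is the contraction estimate. Given two inputs with outputs $(Y,Z,U)$ and $(Y',Z',U')$, I would apply Itô's formula to $e^{\beta(t)}|Y_t - Y'_t|^2$ on $[t\wedge\tau,\tau]$. After taking expectations, the Brownian part contributes $E\int e^{\beta(s)}|Z_s - Z'_s|^2 ds$, the jump part contributes $E\int e^{\beta(s)}\|U_s - U'_s\|^2 ds$ (orthogonality of $\widetilde\mu$ with $W$ and with itself for disjoint $e$), and the driver difference is controlled by (A2). Using $2ab \le \varepsilon a^2 + b^2/\varepsilon$, the Lipschitz cross terms absorb into the $\beta'(s) = Ku_1(s) + Ku_2(s)^2$ weight in the $|Y-Y'|^2$ integral. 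Choosing $K$ large enough turns this into a strict contraction in $\|\cdot\|_\beta$; the Banach fixed-point theorem then yields both existence and uniqueness.

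The main obstacle is the coupling between the random, possibly infinite, horizon $\tau$ and the fact that the Lipschitz constant is a time-dependent function rather than a constant. On a bounded interval with constant Lipschitz coefficient, the classical weight $e^{Kt}$ does the job. Here, the trick is to replace the linear clock $t$ by the cumulative Lipschitz process $\int_0^t u_1 + \int_0^t u_2^2$; condition (\ref{Lipschtiz}) keeps this clock bounded uniformly in $\tau$, so the contraction is global in time and no interval-splitting argument is required. Uniqueness follows a posteriori by applying the same weighted Itô estimate to the difference of two putative solutions.
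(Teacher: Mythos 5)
The paper does not prove this statement: it is imported verbatim from Yin and Situ \cite{cc Yin and Sityu} and used as a black box, so there is no internal proof to compare against. Your Picard iteration with the weighted norm built from the cumulative Lipschitz clock $\beta(t)=K\int_0^t u_1(s)\,ds+K\int_0^t u_2(s)^2\,ds$ is the standard route for random-horizon BSDEs with jumps under the integrability condition (\ref{Lipschtiz}), and the outline is sound: the martingale representation theorem for the filtration generated by $W$ and $\widetilde{\mu}$ produces the frozen-coefficient solution, and the It\^{o} estimate on $e^{\beta(t)}|Y_t-Y'_t|^2$ gives the contraction, with (\ref{Lipschtiz}) guaranteeing that $\beta$ is bounded and hence that the weighted and unweighted norms are equivalent. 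One point deserves more care than your phrase ``choosing $K$ large enough'' suggests: the cross term $2u_1(s)|Y_s-Y'_s|\,|y_s-y'_s|$ must be split as $\varepsilon^{-1}u_1(s)|Y_s-Y'_s|^2+\varepsilon u_1(s)|y_s-y'_s|^2$, so that the input-side contribution is bounded by $\varepsilon\left(\int_0^\infty u_1(s)\,ds\right)\sup_s E\left[e^{\beta(s)}|y_s-y'_s|^2\right]$; it is the smallness of $\varepsilon$ (with $K\geq \varepsilon^{-1}$ absorbing the output-side term into $\beta'$) that yields the contraction, since $\int_0^\infty u_1(s)\,ds$ is finite but not necessarily small, and the analogous split with a small coefficient on the $(z,u)$-inputs handles the $u_2$ terms. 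With that reading, and with the usual Burkholder--Davis--Gundy step to upgrade $\sup_t E[\cdot]$ to $E[\sup_t\cdot]$ in the $Y$-component, your argument closes and matches what one expects the cited proof to do.
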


In preparation of the main theorem, we recall the comparison theorem for
one-dimensional BSDEs with jumps. Therefore, from now on, we assume that $%
n=1.$\ In order to get a comparison result, one must impose a control on the
size of the jumps\footnote{%
See Royer \cite{ccRoyer} for a counter-example when only Lipschitz
continuity is assumed.}.\ This is the reason why (A4) must hold in addition
to (A1) and (A2).

\begin{theorem}[Yin and Mao \protect\cite{ccYin and Mao}]
\label{comparison theorem royer} Consider two generators $f_{1}$ and $f_{2}$
verifying (A1), (A2) and (A4). Let $\xi _{\tau }^{1},\xi _{\tau }^{2}\in
L^{2}(\Omega ,\tciFourier _{\tau },P)$\ be two terminal conditions for BSDEs
driven respectively by $f_{1}$ and $f_{2}.$ Denote by $%
(Y_{t}^{1},Z_{t}^{1},U_{t}^{1})$ and $(Y_{t}^{2},Z_{t}^{2},U_{t}^{2}),$ $%
t\in \lbrack 0,\tau ]$, the respective solutions of these equations.\ If $%
\xi _{\tau }^{1}\leq \xi _{\tau }^{2}$\ $P-a.s.$ and $%
f_{1}(t,Y_{t}^{1},Z_{t}^{1},U_{t}^{1})\leq
f_{2}(t,Y_{t}^{1},Z_{t}^{1},U_{t}^{1})$ $P-a.s.,$ then almost surely $%
Y_{t}^{1}\leq Y_{t}^{2}$, for all $t\in \lbrack 0,\tau ].$
\end{theorem}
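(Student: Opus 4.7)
The plan is to reduce the comparison claim to the positivity of the difference $\hat{Y}_t := Y_t^2 - Y_t^1$, whose dynamics I obtain by subtracting the two BSDEs. Setting also $\hat{Z} := Z^2 - Z^1$ and $\hat{U} := U^2 - U^1$, the process $\hat{Y}$ satisfies a BSDE on $[0,\tau]$ with terminal value $\xi_\tau^2 - \xi_\tau^1 \geq 0$ and driver $f_2(s, Y_s^2, Z_s^2, U_s^2) - f_1(s, Y_s^1, Z_s^1, U_s^1)$. I would split this driver by inserting the three intermediate quantities $f_2(s, Y_s^1, Z_s^2, U_s^2)$, $f_2(s, Y_s^1, Z_s^1, U_s^2)$ and $f_2(s, Y_s^1, Z_s^1, U_s^1)$, producing three ``Lipschitz differences'' of $f_2$ together with the residual term $\Delta f_s := f_2(s, Y_s^1, Z_s^1, U_s^1) - f_1(s, Y_s^1, Z_s^1, U_s^1)$, which is nonnegative by hypothesis.

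I would then linearize the three Lipschitz pieces. Using (A2), the $Y$-difference becomes $a_s \hat{Y}_s$ with $|a_s| \leq u_1(s)$, and the $Z$-difference becomes $\langle b_s, \hat{Z}_s\rangle$ with $|b_s| \leq u_2(s)$, for predictable processes $a$ and $b$ obtained by the usual difference-quotient construction. For the $U$-difference I would invoke (A4): using a standard measurable-selection/finite-difference argument together with the structural bound $\overline{\gamma}^i \geq -1$, one represents the $U$-difference as $\int_B \hat{U}_s(e)\gamma_s(e)^{\top}\lambda(de)$ for some predictable $\gamma$ with $\gamma_s^i(e) \geq -1$ and $\int_B |\gamma_s(e)|^2\lambda(de) \leq u_2(s)^2$. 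This exhibits $\hat{Y}$ as the solution of a \emph{linear} BSDE with jumps whose terminal condition is nonnegative and whose exogenous source term $\Delta f_s$ is nonnegative.

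To read off the sign of this linear BSDE, I would introduce the Dol\'eans--Dade-type positive adjoint process $\Gamma$ solving
\[
d\Gamma_t = \Gamma_{t-}\Bigl[a_t\,dt + b_t\,dW_t + \int_B \gamma_t(e)\,\widetilde{\mu}(dt,de)\Bigr], \qquad \Gamma_0 = 1.
\]
Because each jump of $\Gamma$ is of the form $\Gamma_{t-}(1+\gamma_t^i) \geq 0$, the process $\Gamma$ stays nonnegative, and the integrability condition (\ref{Lipschtiz}) on $u_1, u_2$ yields enough moments. Applying It\^o's formula to $\Gamma_t\hat{Y}_t$, the drift contributions from the $a$-, $b$- and $\gamma$-linearizations cancel exactly against the compensator of the covariation bracket, leaving
\[
d(\Gamma_t\hat{Y}_t) = -\Gamma_t\Delta f_t\,dt + dM_t
\]
for some local martingale $M$. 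Integrating from $t\wedge\tau$ to $\tau$ and taking $\mathcal{F}_t$-conditional expectations then yields
\[
\Gamma_t\hat{Y}_t = E\Bigl[\Gamma_\tau(\xi_\tau^2 - \xi_\tau^1) + \int_{t\wedge\tau}^{\tau}\Gamma_s\Delta f_s\,ds\,\Big|\,\mathcal{F}_t\Bigr] \geq 0,
\]
whence $\hat{Y}_t \geq 0$ almost surely, since $\Gamma_t > 0$ a.s.

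The main obstacle I foresee is twofold. First, the $U$-linearization must be carried out so that the representing process $\gamma$ inherits the pointwise lower bound $\gamma^i \geq -1$, and not merely the two-sided estimate that the absolute value in (A4) provides directly; this is exactly where the structural constraint $\overline{\gamma}^i \geq -1$ built into (A4) is used, and the measurable selection has to be constructed so that this bound survives. Second, over the random and possibly infinite horizon $\tau$, passing from the local-martingale identity to the conditional-expectation identity requires localization by a sequence $\tau_n \uparrow \tau$ and a limit argument exploiting the $S^2\times L^2(W)\times L^2(\widetilde{\mu})$ integrability of the solutions together with (\ref{Lipschtiz}).
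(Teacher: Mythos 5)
The paper does not actually prove this statement: it is imported verbatim from Yin and Mao \cite{ccYin and Mao} (see also Royer \cite{ccRoyer}), so there is no in-paper argument to compare against. Your architecture --- subtract the two equations, telescope the driver difference through intermediate points, linearize each increment, and absorb the linear terms into a nonnegative Dol\'eans--Dade adjoint $\Gamma$ before taking conditional expectations --- is exactly the standard route used in those references, and the localization over the random horizon and the moment estimates for $\Gamma$ coming from (\ref{Lipschtiz}) are routine and correctly anticipated.

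The step that does not go through as written is precisely the one you flag but do not resolve: the linearization of the $U$-increment. Condition (A4) only gives the two-sided bound $\left\vert f(t,Y,Z,U)-f(t,Y,Z,U')\right\vert \leq \left\vert \int_{B}(U(e)-U'(e))\overline{\gamma }_{t}(e)^{\top }\lambda (de)\right\vert$. The natural selection is $\gamma :=\alpha \overline{\gamma }$ with $\alpha :=\bigl(f(U)-f(U')\bigr)/\int_{B}(U-U')\overline{\gamma }^{\top }\lambda (de)\in \lbrack -1,1]$, which reproduces the increment exactly and preserves the $L^{2}$ bound; but when $\alpha <0$ and $\overline{\gamma }^{i}(e)>1$ on a set of positive $\lambda$-measure one gets $\gamma ^{i}=\alpha \overline{\gamma }^{i}<-1$, so the jump factor $1+\gamma ^{i}$ of $\Gamma$ can become negative and the positivity of the adjoint --- on which the entire conclusion rests --- is lost. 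Closing this requires either the one-sided form of the hypothesis (as in Royer: $f(U)-f(U')\leq \int_{B}(U-U')\gamma ^{U,U'}\lambda (de)$ with $-1\leq \gamma ^{U,U'}\leq C$) or an additional upper bound on $\overline{\gamma }$; an appeal to ``a standard measurable-selection argument'' does not by itself produce a representing $\gamma$ that satisfies both the equality and the lower bound $\gamma ^{i}\geq -1$ simultaneously.
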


Before stating the main theorem, we also need the following lemma. It is a
strict comparison theorem for BSDEs with jumps and with the same terminal
condition.

\begin{lemma}
\label{last lemma}Let the generators $f_{1}$ and $f_{2}$ verify (A1), (A2)
and (A4), and let the terminal condition $\xi _{\tau }\in L^{2}(\Omega
,\tciFourier _{\tau },P).$\ Denote by $(Y_{t}^{1},Z_{t}^{1},U_{t}^{1})$ and $%
(Y_{t}^{2},Z_{t}^{2},U_{t}^{2}),$ $t\in \lbrack 0,\tau ]$, the respective
solutions of the BSDEs with generators $f_{1}$ and $f_{2}$ and terminal
conditions $\xi _{\tau }$.\ If $%
f_{2}(t,Y_{t}^{1},Z_{t}^{1},U_{t}^{1})<f_{1}(t,Y_{t}^{1},Z_{t}^{1},U_{t}^{1}) 
$ $P-a.s.,$ then almost surely$,$ $Y_{t}^{2}<Y_{t}^{1}$, for all $t\in
\lbrack 0,\tau ).$
\end{lemma}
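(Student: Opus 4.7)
The strategy is to derive a linear BSDE for the difference $\Delta Y_t := Y_t^1 - Y_t^2$, perform a Girsanov change of measure to absorb the $\Delta Z$ and $\Delta U$ terms, and extract the strict sign from an explicit conditional-expectation representation.

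Set also $\Delta Z_t := Z_t^1 - Z_t^2$, $\Delta U_t := U_t^1 - U_t^2$, and $\varphi_s := f_1(s, Y_s^1, Z_s^1, U_s^1) - f_2(s, Y_s^1, Z_s^1, U_s^1)$, which is strictly positive $P$-a.s.\ by hypothesis. Subtracting the two BSDEs and writing the new driver as $\varphi_s + [f_2(s, Y_s^1, Z_s^1, U_s^1) - f_2(s, Y_s^2, Z_s^2, U_s^2)]$, I would linearize the bracketed piece by a standard telescoping argument: (A2) yields predictable processes $a_s, b_s$ with $|a_s| \le u_1(s)$ and $|b_s| \le u_2(s)$ absorbing the $y$- and $z$-differences, while (A4) provides a predictable field $c_s(e)$ with $\|c_s\|^2 \le u_2(s)^2$ and, crucially, $c_s(e) \ge -1$ absorbing the $u$-difference. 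The equation for $\Delta Y$ then reads
\[
\Delta Y_t = \int_{t\wedge\tau}^\tau\!\Bigl[\varphi_s + a_s \Delta Y_s + b_s \Delta Z_s + \!\int_B c_s(e) \Delta U_s(e) \lambda(de)\Bigr] ds - \!\int_{t\wedge\tau}^\tau\!\Delta Z_s\, dW_s - \!\int_{t\wedge\tau}^\tau\!\int_B \Delta U_s(e)\, \widetilde{\mu}(ds,de),
\]
with $\Delta Y_\tau = 0$.

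Next I would introduce the probability measure $Q$ equivalent to $P$ with density the Doléans-Dade exponential $\mathcal{E}\bigl(\int_0^\cdot b_s\, dW_s + \int_0^\cdot\!\int_B c_s(e)\, \widetilde{\mu}(ds,de)\bigr)$, a true martingale on $[0,\tau]$ by Novikov-type estimates from the deterministic bounds on $b$, $c$ and (\ref{Lipschtiz}). Under $Q$, $\widetilde{W}_s := W_s - \int_0^s b_u\, du$ is a Brownian motion and $\widetilde{\mu}^Q(ds,de) := \mu(ds,de) - (1 + c_s(e))\lambda(de)\, ds$ a compensated jump measure, so the equation above collapses to a purely linear driver $\varphi_s + a_s \Delta Y_s$ in the $Q$-world. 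Applying Itô's formula to $\Gamma_t \Delta Y_t$ with $\Gamma_t := \exp(\int_0^t a_s\, ds)$ (bounded thanks to (\ref{Lipschtiz})) kills the $a_s$ contribution; after verifying that the resulting $Q$-stochastic integrals are true martingales and taking $E_Q[\,\cdot\,|\mathcal{F}_t]$, using $\Delta Y_\tau = 0$, I obtain
\[
\Gamma_t \Delta Y_t = E_Q\!\left[\int_t^\tau \Gamma_s \varphi_s\, ds \,\middle|\, \mathcal{F}_t\right],\qquad t < \tau.
\]
Since $\Gamma_s > 0$ and $\varphi_s > 0$ $P$-a.s.\ (hence $Q$-a.s.\ by equivalence), the right-hand side is strictly positive, so $Y_t^2 < Y_t^1$ $P$-a.s.\ for each $t \in [0, \tau)$ (the joint-in-$t$ statement then follows by the usual right-continuity argument on rationals).

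The main obstacle lies in the jump-component linearization: one must represent $f_2(s, Y_s^1, Z_s^1, U_s^1) - f_2(s, Y_s^1, Z_s^1, U_s^2)$ \emph{exactly} by an integral against $\Delta U$ with a multiplier $c_s(e) \ge -1$ pointwise in $e$. This lower bound is precisely what guarantees positivity of the Doléans-Dade exponential and the equivalence $Q \sim P$ required to transfer strict positivity under $Q$ back to $P$; assumption (A4) is tailored to make such a representation available (via a Radon-Nikodym construction $\lambda$-a.e.\ in $e$). The remaining verifications---Novikov for the Girsanov density and the true-martingale property of the $Q$-stochastic integrals---are routine given the deterministic Lipschitz controls in (A2), (A4) and the integrability (\ref{Lipschtiz}).
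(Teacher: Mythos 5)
Your argument is essentially the proof the paper intends: the paper gives no details for this lemma, merely pointing to the classical comparison-theorem argument of El Karoui et al.\ and to Theorem 3.1/Corollary 3.1 of Yin and Mao, and that argument is exactly the linearization--Girsanov--representation scheme you carry out, with the strict inequality coming from the strictly positive driver difference $\varphi$ under the equivalent measure $Q$. The delicate point you flag --- representing the jump increment as $\int_B c_s(e)\,\Delta U_s(e)\,\lambda(de)$ with $c_s(e)\ge -1$ so that the Dol\'eans-Dade exponential stays positive and $Q\sim P$ --- is precisely the role of (A4) and is likewise left implicit in the paper's citation-only proof.
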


\begin{proof}
The proof follows from the same argument as in the proof of the comparison
theorem for classical BSDEs of El Karoui et al. \cite{ccEl Karoui et al}
(for the jump diffusion case, see the proofs of theorem 3.1 and corollary
3.1 of Yin and Mao \cite{ccYin and Mao}).
\end{proof}

\section{A converse comparison theorem}

We now introduce the infinite horizon decoupled forward backward stochastic
differential equations (FBSDEs, for short) for which we will prove a
converse comparison result.\ 

For any given $(t,x)\in \lbrack 0,\infty )\times 
%TCIMACRO{\U{211d} }%
%BeginExpansion
\mathbb{R}
%EndExpansion
^{m}$, consider the following FBSDEs%
\begin{eqnarray}
X_{s}^{t,x}
&=&x+\dint\nolimits_{t}^{s}a(r,X_{r}^{t,x})dr+\dint%
\nolimits_{t}^{s}b(r,X_{r}^{t,x})dW_{r}  \notag \\
&&+\dint\nolimits_{t}^{s}\dint\nolimits_{B}c(r,X_{r^{-}}^{t,x},e)\widetilde{%
\mu }(dr,de),  \label{FBSDEI} \\
Y_{s} &=&h(X_{\infty }^{t,x})+\dint\nolimits_{s}^{\infty
}f_{1}(r,X_{r}^{t,x},Y_{r},Z_{r},\dint\nolimits_{B}U_{r}(e)\gamma
_{r}(e)^{\top }\lambda (de))dr  \notag \\
&&-\dint\nolimits_{s}^{\infty }Z_{r}dW_{r}-\dint\nolimits_{s}^{\infty
}\dint\nolimits_{B}U_{r}(e)\widetilde{\mu }(dr,de),  \label{FBSDEII}
\end{eqnarray}%
where $s\in \lbrack t,\infty )$, $\gamma $ is assumed to be deterministic
and $\gamma :[0,\infty )\times B\rightarrow 
%TCIMACRO{\U{211d} }%
%BeginExpansion
\mathbb{R}
%EndExpansion
^{1\times l}$ is $\mathbf{B}(%
%TCIMACRO{\U{211d} }%
%BeginExpansion
\mathbb{R}
%EndExpansion
^{+})\otimes \mathbf{B}$-measurable and satisfies

$\tint\nolimits_{B}\left\vert \gamma _{t}(e)\right\vert ^{2}\lambda (de)\leq
u_{2}(t)^{2},$ where $u_{2}(t)$\ is as in (\ref{Lipschtiz}) and $\gamma
_{t}^{i}(e),$ $i=1,...,l,$ is the $i$th component of $\gamma _{t}(e)$
satisfying $\gamma _{t}^{i}(e)\geq -1.$

From now on, we assume that $a,b,c,f_{1}$ and $h$ in (\ref{FBSDEI}) and (\ref%
{FBSDEII}) are deterministic. We assume that $a:[0,\infty )\times 
%TCIMACRO{\U{211d} }%
%BeginExpansion
\mathbb{R}
%EndExpansion
^{m}\rightarrow 
%TCIMACRO{\U{211d} }%
%BeginExpansion
\mathbb{R}
%EndExpansion
^{m}$, $b:[0,\infty )\times 
%TCIMACRO{\U{211d} }%
%BeginExpansion
\mathbb{R}
%EndExpansion
^{m}\rightarrow 
%TCIMACRO{\U{211d} }%
%BeginExpansion
\mathbb{R}
%EndExpansion
^{m\times d}$ and $c:[0,\infty )\times 
%TCIMACRO{\U{211d} }%
%BeginExpansion
\mathbb{R}
%EndExpansion
^{m}\times B\rightarrow 
%TCIMACRO{\U{211d} }%
%BeginExpansion
\mathbb{R}
%EndExpansion
^{m\times l}$\ are continuous and such that they guarantee the existence and
uniqueness of a strong solution to (\ref{FBSDEI}). We also assume that $%
X_{\infty }^{t,x}$ exists and that $E\left[ \left\vert X_{\infty
}^{t,x}\right\vert ^{2}\right] <\infty $.

Regarding (\ref{FBSDEII}), we will suppose that the real valued functions $h:%
%TCIMACRO{\U{211d} }%
%BeginExpansion
\mathbb{R}
%EndExpansion
^{m}\rightarrow 
%TCIMACRO{\U{211d} }%
%BeginExpansion
\mathbb{R}
%EndExpansion
$ and $f_{1}:[0,\infty )\times 
%TCIMACRO{\U{211d} }%
%BeginExpansion
\mathbb{R}
%EndExpansion
^{m}\times 
%TCIMACRO{\U{211d} }%
%BeginExpansion
\mathbb{R}
%EndExpansion
\times 
%TCIMACRO{\U{211d} }%
%BeginExpansion
\mathbb{R}
%EndExpansion
^{1\times d}\times 
%TCIMACRO{\U{211d} }%
%BeginExpansion
\mathbb{R}
%EndExpansion
\rightarrow 
%TCIMACRO{\U{211d} }%
%BeginExpansion
\mathbb{R}
%EndExpansion
$ are $\mathbf{B}(%
%TCIMACRO{\U{211d} }%
%BeginExpansion
\mathbb{R}
%EndExpansion
^{m})$-measurable and $\mathbf{B}(%
%TCIMACRO{\U{211d} }%
%BeginExpansion
\mathbb{R}
%EndExpansion
^{+})\otimes \mathbf{B}(%
%TCIMACRO{\U{211d} }%
%BeginExpansion
\mathbb{R}
%EndExpansion
^{m})\otimes \mathbf{B}(%
%TCIMACRO{\U{211d} }%
%BeginExpansion
\mathbb{R}
%EndExpansion
)\otimes \mathbf{B}(%
%TCIMACRO{\U{211d} }%
%BeginExpansion
\mathbb{R}
%EndExpansion
^{1\times d})\otimes \mathbf{B}(%
%TCIMACRO{\U{211d} }%
%BeginExpansion
\mathbb{R}
%EndExpansion
\mathbf{\mathbf{)}}$-measurable, respectively, and that they are Lipschitz
continuous with respect to $X$, with Lipschitzian function satisfying (\ref%
{Lipschtiz}). Moreover, we assume that for every$\ X\in 
%TCIMACRO{\U{211d} }%
%BeginExpansion
\mathbb{R}
%EndExpansion
^{m}$, $f(t,X,Y,Z,U)=f_{1}(t,X,Y,Z,\tint\nolimits_{B}U(e)\gamma
_{t}(e)^{\top }\lambda (de))$ satisfies assumptions (A1), (A2), (A3) and
(A4), and we assume that there exists a constant $C$ such that for every$\
X\in 
%TCIMACRO{\U{211d} }%
%BeginExpansion
\mathbb{R}
%EndExpansion
^{m}$, $\left\vert h(X)\right\vert \leq C(1+\left\vert X\right\vert )$, so
that $h(X_{\infty }^{t,x})\in L^{2}(\Omega ,\tciFourier ,P)$ and (\ref%
{FBSDEII}) admits a unique solution. We denote the solution by $%
(Y_{r}^{t,x,1},Z_{r}^{t,x,1},U_{r}^{t,x,1}),$ where $Y_{r}^{t,x,1}$ is
adapted and $(Z_{r}^{t,x,1},U_{r}^{t,x,1})$ are predictable with respect to
the filtration $(\tciFourier _{r}^{t},r\geq t),$ where $\tciFourier
_{r}^{t}=\sigma \lbrack W_{s}-W_{t},\mu ((t,s],A);t\leq s\leq r,A\in \mathbf{%
B}]\vee N.$ Then,%
\begin{equation*}
(u^{1}(t,x),\chi ^{1}(t,x),\zeta
^{1}(t,x)):=(Y_{t}^{t,x,1},Z_{t}^{t,x,1},\dint\nolimits_{B}U_{t}^{t,x,1}(e)%
\gamma _{t}(e)^{\top }\lambda (de))
\end{equation*}%
is deterministic, and from the uniqueness of solution to (\ref{FBSDEII}), it
is known (see for example Barles et al.\ \cite{cc barles buckdhan and
pardoux}, p. 69) that for any $s\in \lbrack t,\infty ),$%
\begin{equation}
Y_{s}^{t,x,1}=Y_{s}^{s,X_{s}^{t,x},1}=u^{1}(s,X_{s}^{t,x}).  \label{ccY}
\end{equation}%
Since the same reasoning holds for $\chi $\ and $\zeta $, we similarly have%
\begin{eqnarray}
Z_{s}^{t,x,1} &=&\chi ^{1}(s,X_{s}^{t,x}),  \label{ccZ} \\
\dint\nolimits_{B}U_{s}^{t,x,1}(e)\gamma _{s}(e)^{\top }\lambda (de)
&=&\zeta ^{1}(s,X_{s-}^{t,x}).  \label{ccU}
\end{eqnarray}

We can now state the converse comparison theorem for the infinite horizon
decoupled FBSDEs (\ref{FBSDEI})-(\ref{FBSDEII}):

\begin{theorem}[Converse comparison theorem]
Suppose that $f_{1}$ is continuous and let $a,b,c,f_{1}$ and $h$ satisfy the
above assumptions, so that there exists a unique solution to (\ref{FBSDEI})
and (\ref{FBSDEII}). Denote by $Y_{s}^{t,x,1}(\xi _{v})$ the first component
of the solution of (\ref{FBSDEII}) with generator $f_{1}$ and terminal
condition $\xi _{v}$ at time $v$, where $v$ is a stopping time and $t\leq
s<v.$ Assume further that $Y_{t}^{t,x,1}(h(X_{\infty }^{t,x}))=u^{1}(t,x)\in
C^{1,2}([0,\infty )\times 
%TCIMACRO{\U{211d} }%
%BeginExpansion
\mathbb{R}
%EndExpansion
^{m}).$ If, for any given $(t,x)\in \lbrack 0,\infty )\times 
%TCIMACRO{\U{211d} }%
%BeginExpansion
\mathbb{R}
%EndExpansion
^{m}$, $\exists \delta >0$ such that 
\begin{equation}
Y_{t}^{t,x,1}(u^{1}(v,X_{v}^{t,x}))\leq Y_{t}^{t,x,2}(u^{1}(v,X_{v}^{t,x})),
\label{inv com}
\end{equation}%
for every stopping time $v$ such that $t\leq v\leq t+\delta $, and where $%
Y_{t}^{t,x,2}(u^{1}(v,X_{v}^{t,x}))$ is the first component of the time $t$
solution of a FBSDE with driver $f_{2}$ satisfying the same assumptions as $%
f_{1}$\ and terminal condition $u^{1}(v,X_{v}^{t,x})$ at time $v$, then 
\begin{equation*}
f_{1}(t,x,u^{1}(t,x),\chi ^{1}(t,x),\zeta ^{1}(t,x))\leq
f_{2}(t,x,u^{1}(t,x),\chi ^{1}(t,x),\zeta ^{1}(t,x)).
\end{equation*}
\end{theorem}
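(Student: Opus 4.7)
My plan is to argue by contradiction, localising on a small space-time neighbourhood and invoking the strict comparison of Lemma~\ref{last lemma}. Suppose, contrary to the conclusion, that
\[
f_{1}(t,x,u^{1}(t,x),\chi ^{1}(t,x),\zeta ^{1}(t,x))>f_{2}(t,x,u^{1}(t,x),\chi ^{1}(t,x),\zeta ^{1}(t,x)).
\]
Using $u^{1}\in C^{1,2}$, I first apply Itô's formula to $u^{1}(s,X_{s}^{t,x})$ and match its Brownian and compensated-jump martingale parts with the corresponding parts of the BSDE~(\ref{FBSDEII}); together with (\ref{ccZ})--(\ref{ccU}) this yields the explicit representations
\[
\chi ^{1}(r,y)=\nabla _{x}u^{1}(r,y)\,b(r,y),\qquad \zeta ^{1}(r,y)=\int_{B}\bigl[u^{1}(r,y+c(r,y,e))-u^{1}(r,y)\bigr]\gamma _{r}(e)^{\top }\lambda (de).
\]
Continuity of $b,c$, of $\nabla u^{1}$, and dominated convergence based on $\int_{B}|\gamma _{r}(e)|^{2}\lambda (de)\le u_{2}(r)^{2}$ then make $\chi ^{1}$ and $\zeta ^{1}$ continuous. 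Since $f_{1}$ is continuous by hypothesis and $f_{2}$ by assumption (A3), there exist an open neighbourhood $V\ni (t,x)$ and $\varepsilon >0$ such that $(f_{1}-f_{2})(r,y,u^{1}(r,y),\chi ^{1}(r,y),\zeta ^{1}(r,y))\ge \varepsilon $ for all $(r,y)\in V$.

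Next I localise by means of $v:=\inf \{s\ge t:(s,X_{s}^{t,x})\notin V\}\wedge (t+\delta )$, which, by RCLL of $X^{t,x}$ and openness of $V$, is a stopping time with $t<v\le t+\delta $ almost surely (after possibly shrinking $\delta $) and is therefore admissible in the hypothesis (\ref{inv com}). For every $r\in [t,v)$ one has $(r,X_{r}^{t,x})\in V$, so the process-level strict inequality
\[
f_{1}\bigl(r,X_{r}^{t,x},u^{1}(r,X_{r}^{t,x}),\chi ^{1}(r,X_{r}^{t,x}),\zeta ^{1}(r,X_{r}^{t,x})\bigr)>f_{2}\bigl(r,X_{r}^{t,x},u^{1}(r,X_{r}^{t,x}),\chi ^{1}(r,X_{r}^{t,x}),\zeta ^{1}(r,X_{r}^{t,x})\bigr)
\]
holds $P$-a.s.\ on $[t,v)$. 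By the uniqueness in Theorem~\ref{Theorem BBP} combined with (\ref{ccY})--(\ref{ccU}), the restriction of $(u^{1}(\cdot ,X_{\cdot }^{t,x}),\chi ^{1}(\cdot ,X_{\cdot }^{t,x}),U_{\cdot }^{t,x,1})$ to $[t,v]$ is the unique solution on $[t,v]$ of the BSDE driven by $f_{1}$ with terminal condition $u^{1}(v,X_{v}^{t,x})$, so $Y_{t}^{t,x,1}(u^{1}(v,X_{v}^{t,x}))=u^{1}(t,x)$; moreover the strict inequality just displayed is exactly the hypothesis of Lemma~\ref{last lemma} applied to the two BSDEs on $[t,v]$ with generators $f_{1},f_{2}$ and the common terminal $u^{1}(v,X_{v}^{t,x})$.

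Lemma~\ref{last lemma} then gives $Y_{t}^{t,x,2}(u^{1}(v,X_{v}^{t,x}))<u^{1}(t,x)=Y_{t}^{t,x,1}(u^{1}(v,X_{v}^{t,x}))$, which contradicts (\ref{inv com}); hence the assumed strict inequality between $f_{1}$ and $f_{2}$ at $(t,x,u^{1}(t,x),\chi ^{1}(t,x),\zeta ^{1}(t,x))$ is impossible, and the theorem follows. The main technical obstacle I anticipate is establishing the continuity of $\zeta ^{1}$ on a neighbourhood of $(t,x)$, because it is given by an integral over the jump space $B$ and is the one quantity that does not follow directly from $u^{1}\in C^{1,2}$; the dominated-convergence argument sketched above relies on local Lipschitz estimates for $u^{1}$, the continuity of $c$, and the integrability of $\gamma $ against $\lambda $. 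A minor secondary point is verifying that Lemma~\ref{last lemma}, stated on an interval of the form $[0,\tau ]$, carries over to the random interval $[t,v]$; this should follow by a standard time-shift.
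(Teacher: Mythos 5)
Your proposal is correct and follows essentially the same route as the paper: argue by contradiction, use the Feynman--Kac/It\^{o} representations $\chi^{1}=u^{1}_{x}{}^{\top}b$ and $\zeta^{1}(t,x)=\int_{B}(u^{1}(t,x+c(t,x,e))-u^{1}(t,x))\gamma_{t}(e)^{\top}\lambda(de)$ to get continuity, propagate the strict inequality to a space-time neighbourhood, localise with a stopping time bounded by $t+\delta$, and invoke the strict comparison of Lemma~\ref{last lemma} together with the Markovian identities (\ref{ccY})--(\ref{ccU}) to contradict (\ref{inv com}). The only cosmetic difference is that you localise on an open neighbourhood $V$ where the paper uses a box $\{t\leq s\leq t+\eta,\ |x-k|\leq\eta\}$, which changes nothing of substance.
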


\begin{proof}
By contradiction, suppose that 
\begin{equation}
f_{1}(t,x,u^{1}(t,x),\chi ^{1}(t,x),\zeta
^{1}(t,x))>f_{2}(t,x,u^{1}(t,x),\chi ^{1}(t,x),\zeta ^{1}(t,x)).
\label{f ineq first}
\end{equation}%
By assumption, $f,a,b$ and $c$ are continuous, and $u^{1}$ is assumed to be
continuous and smooth, so it follows from the Feynman-Kac formula (see also
theorem 3.4 in Barles et al. \cite{cc barles buckdhan and pardoux}) that $%
\chi ^{1}$ and $\zeta ^{1}$ are continuous functions, since $\chi
^{1}(t,x)=u_{x}^{1}(t,x)^{\top }b(t,x)$ and $\zeta
^{1}(t,x)=\tint\nolimits_{B}(u^{1}(t,x+c(t,x,e))-u^{1}(t,x))\gamma
_{t}(e)^{\top }\lambda (de)$ (the letter $x$ as lower indice indicates
differentiation, as usual). As a consequence, there exists $0<\eta <\infty $
such that for any $(s,k)\in \lbrack t,\infty )\times 
%TCIMACRO{\U{211d} }%
%BeginExpansion
\mathbb{R}
%EndExpansion
^{m}$ satisfying $t\leq s\leq t+\eta ,$ $\left\vert x-k\right\vert \leq \eta
,$ 
\begin{equation}
f_{1}(s,k,u^{1}(s,k),\chi ^{1}(s,k),\zeta
^{1}(s,k))>f_{2}(s,k,u^{1}(s,k),\chi ^{1}(s,k),\zeta ^{1}(s,k)).
\label{cc prolonged ineq}
\end{equation}%
Define now the stopping time%
\begin{equation*}
\tau :=\inf \{s>t:\left\vert X_{s}^{t,x}-x\right\vert >\eta \}\wedge (t+\eta
)\wedge (t+\delta ),
\end{equation*}%
and let%
\begin{equation*}
(\overline{Y}_{s}^{1},\overline{Z}_{s}^{1},\overline{U}%
_{s}^{1}):=(Y_{s}^{t,x,1},Z_{s}^{t,x,1},U_{s}^{t,x,1}),\text{ }t\leq s\leq
\tau .
\end{equation*}%
Then$,$ $(\overline{Y}_{s}^{1},\overline{Z}_{s}^{1},\overline{U}_{s}^{1})$
is solution of the following FBSDE:%
\begin{eqnarray*}
\overline{Y}_{s}^{1} &=&u^{1}(\tau ,X_{\tau
}^{t,x})+\dint\nolimits_{s}^{\tau }f_{1}(r,X_{r}^{t,x},\overline{Y}_{r}^{1},%
\overline{Z}_{r}^{1},\dint\nolimits_{B}\overline{U}_{r}^{1}(e)\gamma
_{r}(e)^{\top }\lambda (de))dr \\
&&-\dint\nolimits_{s}^{\tau }\overline{Z}_{r}^{1}dW_{r}-\dint\nolimits_{s}^{%
\tau }\dint\nolimits_{B}\overline{U}_{r}^{1}(e)\widetilde{\mu }(dr,de).
\end{eqnarray*}%
Now, consider the FBSDE%
\begin{eqnarray}
\overline{Y}_{s}^{2} &=&u^{1}(\tau ,X_{\tau
}^{t,x})+\dint\nolimits_{s}^{\tau }f_{2}(r,X_{r}^{t,x},\overline{Y}_{r}^{2},%
\overline{Z}_{r}^{2},\dint\nolimits_{B}\overline{U}_{r}^{2}(e)\gamma
_{r}(e)^{\top }\lambda (de))dr  \label{BSDE2} \\
&&-\dint\nolimits_{s}^{\tau }\overline{Z}_{r}^{2}dW_{r}-\dint\nolimits_{s}^{%
\tau }\dint\nolimits_{B}\overline{U}_{r}^{2}(e)\widetilde{\mu }(dr,de), 
\notag
\end{eqnarray}%
where 
\begin{equation*}
(\overline{Y}_{s}^{2},\overline{Z}_{s}^{2},\overline{U}_{s}^{2}):=(\overline{%
Y}_{s}^{t,x,2},\overline{Z}_{s}^{t,x,2},\overline{U}_{s}^{t,x,2}),\text{ }%
t\leq s\leq \tau .
\end{equation*}
$u^{1}(\tau ,X_{\tau }^{t,x})$\ is the first component of the time $\tau $
solution of a BSDE with coefficient $f_{1}$ and terminal condition $%
h(X_{\infty }^{t,x});$ therefore, $u^{1}(\tau ,X_{\tau }^{t,x})$ is square
integrable, that is, $E\left[ u^{1}(\tau ,X_{\tau }^{t,x})^{2}\right]
<\infty .$ By assumption on $f_{2}$, it follows that there exists a unique
solution to (\ref{BSDE2}).

Taking (\ref{ccY}), (\ref{ccZ}) and (\ref{ccU}) into account, together with (%
\ref{cc prolonged ineq}), we can apply the comparison theorem of lemma \ref%
{last lemma} and get%
\begin{equation}
\overline{Y}_{t}^{1}(u^{1}(\tau ,X_{\tau }^{t,x}))>\overline{Y}%
_{t}^{2}(u^{1}(\tau ,X_{\tau }^{t,x})).  \label{contradiction}
\end{equation}%
which, by uniqueness, yields that 
\begin{equation*}
Y_{t}^{t,x,1}(u^{1}(\tau ,X_{\tau }^{t,x}))>Y_{t}^{t,x,2}(u^{1}(\tau
,X_{\tau }^{t,x})),
\end{equation*}%
which contradicts (\ref{inv com}). Therefore, we must have that 
\begin{equation*}
f_{1}(t,x,u^{1}(t,x),\chi ^{1}(t,x),\zeta ^{1}(t,x))\leq
f_{2}(t,x,u^{1}(t,x),\chi ^{1}(t,x),\zeta ^{1}(t,x)).
\end{equation*}
\end{proof}

\end{document}